\documentclass[a4paper, 12pt]{article}

\usepackage[sort&compress]{natbib}
\bibpunct{(}{)}{;}{a}{}{,} 

\usepackage{amsthm, amsmath, amssymb, mathrsfs, multirow, url, subfigure}
\usepackage{graphicx} 
\usepackage{ifthen} 
\usepackage{amsfonts}
\usepackage[usenames]{color}
\usepackage{fullpage}



\theoremstyle{plain} 
\newtheorem{theorem}{Theorem}
\newtheorem{cor}{Corollary}

\newtheorem{lemma}{Lemma}

\theoremstyle{definition}

\theoremstyle{remark}
\newtheorem{remark}{Remark}

\newcommand{\RR}{\mathbb{R}}

\newcommand{\E}{\mathsf{E}}

\newcommand{\prob}{\mathsf{P}}

\renewcommand{\phi}{\varphi}

\renewcommand{\S}{\mathcal{S}}

\newcommand{\nm}{\mathsf{N}}

\newcommand{\bin}{\mathsf{Bin}}

\newcommand{\unif}{\mathsf{Unif}}


\title{Empirical priors and coverage of posterior credible sets in a sparse normal mean model}
\author{
Ryan Martin\footnote{Department of Statistics, North Carolina State University, {\tt rgmarti3@ncsu.edu}} \quad and \quad Bo Ning\footnote{Department of Statistics and Data Science, Yale University, {\tt bo.ning@yale.edu}}
}
\date{\today}

\begin{document}

\maketitle 


\begin{abstract}
Bayesian methods provide a natural means for uncertainty quantification, that is, credible sets can be easily obtained from the posterior distribution.  But is this uncertainty quantification valid in the sense that the posterior credible sets attain the nominal frequentist coverage probability?  This paper investigates the frequentist validity of posterior uncertainty quantification based on a class of empirical priors in the sparse normal mean model.  In particular, we show that our marginal posterior credible intervals achieve the nominal frequentist coverage probability under conditions slightly weaker than needed for selection consistency and a Bernstein--von Mises theorem for the full posterior, and numerical investigations suggest that our empirical Bayes method has superior frequentist coverage probability properties compared to other fully Bayes methods.  


\smallskip

\emph{Keywords and phrases:} Bayesian inference; Bernstein--von Mises theorem; concentration rate; high-dimensional model; uncertainty quantification.
\end{abstract}

\section{Introduction}
\label{S:intro}

Consider the high-dimensional normal mean model 
\[ Y_i \sim \nm(\theta_i, \sigma^2), \quad i=1,\ldots,n, \]
where $Y=(Y_1,\ldots,Y_n)^\top$ are independent, the variance $\sigma^2 > 0$ is known, and inference on the unknown mean vector $\theta = (\theta_1,\ldots,\theta_n)^\top$ is desired.  We call this ``high-dimensional'' because there are $n$ unknown parameters but only $n$ data points, with $n \to \infty$.  It is virtually hopeless to make quality inference on a high-dimensional parameter without some additional structure to effectively reduce the dimension, and the structure we assume here is {\em sparsity}, i.e., most $\theta_i$'s are zero.  Work on the sparse normal mean problem goes back at least to \citet{donohojohnstone1994b}, and it is now a canonical example in the high-dimensional inference literature.  Some recent efforts on constructing estimators that achieve the asymptotically optimal minimax rate can be found in \citet{johnstonesilverman2004}, \citet{jiang.zhang.2009}, \citet{castillo.vaart.2012}, \citet{martin.walker.eb}, \citet{pas.kleijn.vaart.2014, pas.szabo.vaart.rate}, and \citet{ghosh.chakrabarti.shrink}.  Extensions to the high-dimensional regression setting can be found in \citet{castillo.schmidt.vaart.reg}, \citet{martin.mess.walker.eb}, and \citet{ning.ghosal}.  

The first author (RM) learned about and came to appreciate the intricacies of this sparse normal means problem from his primary advisor, Professor Jayanta K.~Ghosh, or JKG for short.  His book \citep[][Chapter~9]{ghosh-etal-book} gives an excellent introduction to this problem, including the classical James--Stein and full/empirical Bayes solutions, and much of JKG's later work was devoted to the large-scale significance testing version of this problem, e.g., \citet{bogdan.ghosh.tokdar.2008}, \citet{bcfg2010}, and \citet{datta.ghosh.2013}, among others, and there have been many subsequent developments based on his work.  Sadly, JKG passed away on September 30th, 2017, before we had a chance to talk with him about our empirical priors. But it is an honor present some new developments on one of JKG's favorite problems in this paper dedicated to him.

An advantage of a Bayesian approach to these problems is that it yields a posterior distribution which can be used to quantify uncertainty.  Computational issues aside, it is easy to construct a posterior credible set for $\theta$.  The question is: does the credibility level of the aforementioned set equal the frequentist coverage probability of the set?  If so, then we say that the posterior provides {\em valid uncertainty quantification}.  It is not clear, however, whether one should expect this property to hold in problems where the dimension is increasing with the sample size.  In fact, there are negative results \citep[e.g.,][]{li1989} which say, roughly, credible sets based on posterior distributions that adaptively achieve the optimal concentration rate cannot simultaneously provide valid uncertainty quantification uniformly over the parameter space.  That is, for any posterior with the optimal concentration rate, there must be true parameter values that the $100(1-\gamma)$\% posterior credible sets cover with probability (much) less than $1-\gamma$.  Consequently, there is interest in identifying these troublemaker parameter values.  Recent efforts along these lines in the sparse normal mean model include \citet{belitser.ddm}, \citet{belitser.nurushev.uq}, \citet{pas.szabo.vaart.uq}, \citet{castillo.szabo.coverage}, and \citet{nurushev.belitser.projection}; for details beyond the normal mean model, see, e.g., \citet{szabo.vaart.zanten.2015}, \citet{belitser.ghosal.ebuq}, and \citet{ebpred}.  

In this paper we pursue the question of valid posterior uncertainty quantification in the sparse normal mean model in several new ways.  First, we focus our investigation on a relatively new type of posterior distribution based on a suitable {\em empirical prior}.  This approach differs considerably from classical empirical Bayes and has been shown to have strikingly good practical and theoretical performance; see \citet{martin.walker.deb} and the references in Section~\ref{S:model}.  That this method would also provide valid uncertainty quantification was conjectured by \citet{martin.horseshoe.discuss} and our work here confirms that.  Second, while previous investigations into posterior uncertainty quantification have focused primarily on credible $\ell_2$-balls for the full $\theta$ vector, we consider an arguably more practical question of marginal credible intervals for certain features of $\theta$.  From a general Bernstein--von Mises result for the full posterior in Theorem~\ref{thm:bvm}, we give sufficient conditions for valid uncertainty quantification about a general linear functional of $\theta$ in Corollary~\ref{cor:coverage}.  For a specific linear functional, however, such as $\theta_1$, one might ask if the conditions can be weakened.  To this end, we show that the nominal coverage can be achieved even if the true $|\theta_1^\star|$ is slightly smaller than what is needed for selection consistency or the Bernstein--von Mises theorem; see Theorem~\ref{thm:medium}.  To our knowledge, there are no results---at least none that are positive---concerning the coverage probability properties of marginal credible intervals for the horseshoe or any other prior distributions.  Third, we give numerical results to compare the performance of credible sets based on our empirical priors and those based on the horseshoe prior.  These results confirm what the theory suggests, namely, that there is a non-trivial range of non-zero $\theta_1^\star$ values that the former can properly cover while the latter cannot.  Finally, in Section~\ref{S:discuss}, we provide some concluding remarks and directions for future research.

\section{An empirical Bayes model}
\label{S:model}

\subsection{Prior and posterior construction}
\label{SS:prior}

Under assumption of sparsity, it makes sense to re-express the vector $\theta$ as a pair $(S, \theta_S)$, where $S \subseteq \{1,2,\ldots,n\}$, the {\em configuration} of $\theta$, indicates which coordinates are non-zero, and $\theta_S$ is an $|S|$-vector that contains $S$-specific non-zero values; here $|S|$ is the cardinality of $S$.  With this decomposition, a hierarchical prior is natural, i.e., a prior for the configuration $S$ and then, given $S$, a prior for the $S$-specific parameter $\theta_S$.  The sparsity assumption provides some relevant prior information to help with the choice of the prior on $S$.  On the other hand, no prior information is available for $\theta_S$, so a ``default'' prior is recommended.  It turns out that certain features of this default prior---in particular, its tails---can significantly affect the properties of the corresponding posterior.  Indeed, using normal priors for $\theta_S$ can lead to suboptimal posterior concentration properties, whereas heavier-tailed priors like Laplace have much better performance \citep[e.g.,][]{castillo.vaart.2012}.  Unfortunately, while the normal priors are conjugate and lead to relatively simple posterior computations, the heavy-tailed priors make computation more difficult.  So one faces a dilemma: use a simple conjugate prior that is easy to compute but may have suboptimal theoretical performance, or pay the non-trivial computational price for the use of a theoretically justified prior.  Is such a choice really necessary?  

\citet{martin.walker.eb} argued that the prior tails cannot have much of an effect if its center is appropriately chosen.  In other words, an appropriately-centered conjugate normal prior should not suffer the same suboptimality property as if it had a fixed center.  Of course, an ``appropriate'' center can only originate from the data, hence an {\em empirical prior}.  The particular form of our empirical prior, $\Pi_n$, for $\theta=(S,\theta_S)$ is 
\[ S \sim \pi(S) \quad \text{and} \quad (\theta_S \mid S) \sim \nm_{|S|}(Y_S, \sigma^2 \tau^{-1} I_{|S|}), \]
where $\pi(S)$ is a marginal prior for the configuration $S$ to be specified below, $Y_S$ is the sub-vector of $Y$ corresponding to configuration $S$, and $\tau > 0$ is a prior precision factor which, as we discuss later, will be taken relatively small.  Again, the idea is to properly center the prior based on the data, so that the thin normal tails will not affect the asymptotic concentration properties.  
A regression version of this formulation is given in \citet{martin.mess.walker.eb}, and \citet{martin.walker.deb} describe a general empirical prior framework.   

Before getting into the specifics of the empirical prior formulation, we pause here to say a few words about the philosophy behind such an approach.  As we see it, in modern Bayesian analysis, especially that of high-dimensional problems, the prior is treated primarily as an input that can be tuned and tweaked in order to obtain a posterior distribution which is ``good'' in one ore more senses.  For example, the horseshoe prior of \citet{carvalho.polson.scott.2010} and its various extensions are popular nowadays, not because they capture genuine subjective beliefs, but because the corresponding posterior is (believed to be) fast to compute and achieves desirable asymptotic concentration rates and uncertainty quantification properties.\footnote{In fact, it is not uncommon in seminar talks or less formal discussions to hear one motivate the construction of a new prior by saying that existing priors ``don't work'' and/or the new prior ``works better,'' not that it more accurately reflects subjective prior beliefs, etc.}  If the prior is chosen primarily for the frequentist properties that the corresponding posterior satisfies, then there is no reason not to consider using a data-dependent prior if that too can produce a posterior with as good or better properties.  A goal of this paper, and the related literature on empirical priors, is to show that, indeed, they achieve these desirable properties and, at least in some cases, outperform the state-of-the-art Bayesian methods based on fixed priors.  

Returning to the construction, one has some flexibility in the choice of marginal prior for the configuration and, in our numerical investigations in Section~\ref{S:examples}, we will consider two such priors.  Both decompose the prior $\pi$ for $S$ into a marginal prior, $f_n(s)$, for the size, $s=|S|$, of the configuration, and a uniform prior over the space of configurations of a given size.  Below are the respective mass functions, $f_n$, for the two priors.  
\begin{itemize}
\item The {\em complexity prior}, described in \citet{castillo.vaart.2012} and used in \citet{martin.mess.walker.eb} for regression, has mass function $f_n$ given by 
\begin{equation}
\label{eq:complexity}
f_n(s) \propto (c n^a)^{-s}, \quad s=0,1,\ldots,n, 
\end{equation}
for constants $a, c > 0$.  This is just a truncated geometric prior with success probability equal to $1 - (cn^a)^{-1}$ so, depending on $a$, it strongly penalizes complex configurations, i.e., those models with a large number of non-zero means.  
\vspace{-2mm}
\item The {\em beta--binomial prior} in \citet{martin.walker.eb} is of the form 
\begin{equation}
\label{eq:beta.binom}
f_n(s) = b_n \binom{n}{s} \int_0^1 w^{n + b_n - s - 1} (1-w)^s \, dw, 
\end{equation}
where $b_n \to \infty$ as $n \to \infty$.  This corresponds to a beta prior, ${\sf Beta}(b_n,1)$, on a latent variable $W$ and, given $W=w$, a binomial prior, $\bin(n, 1-w)$, for $|S|$.  The mean of $W$ is $b_n(b_n + 1)^{-1}$, which makes the prior mean of $|S|$ close to zero so, like \eqref{eq:complexity}, the prior in \eqref{eq:beta.binom} also severely penalizes complex configurations.  
\end{itemize}

Finally, the empirical Bayes posterior distribution for $\theta$, denoted by $\Pi^n$, is obtained by combining the prior with a fractional power of the likelihood function according to Bayes's theorem.  That is, 
\begin{equation}
\label{eq:post}
\Pi^n(d\theta) \propto L_n(\theta)^\alpha \, \Pi_n(d\theta), 
\end{equation}
where $\alpha \in (0,1)$ and $L_n(\theta) \propto  \exp\{-\frac{1}{2\sigma^2} \|Y-\theta\|^2\}$, with $\|\cdot\|$ the $\ell_2$-norm.  The power $\alpha$ does not affect the normality of the likelihood, $L_n(\theta)$, it only inflates the variance, hence making its contours wider.  In fact, given the normal form of the conditional priors, the posterior actually takes a relatively simple form; see Section~\ref{SS:computation} below.  

As for the fractional power, the results in \citet{belitser.nurushev.uq} and \citet{belitser.ghosal.ebuq} suggest that taking $\alpha=1$ might be possible, perhaps with some adjustments elsewhere, but we believe there are reasons to retain this flexibility, especially in the context of uncertainty quantification.  In particular, existing results on coverage probability of credible sets require a {\em blow-up factor}---e.g., the factor $L$ in Equation~(3) of \citet{pas.szabo.vaart.uq}---to inflate the credible set beyond the size determined by the posterior distribution itself.  As is well-known in the generalized Bayes community \citep[e.g.,][]{grunwald.ommen.scaling, syring.martin.scaling, holmes.walker.scaling}, the inclusion of $\alpha$ makes the posterior distribution wider and, therefore, also has a beneficial blow-up effect on the credible sets; see Remark~\ref{re:alpha}.

\subsection{Posterior computation}
\label{SS:computation}

As advertised, the empirical prior leads to relatively simple posterior computation.  In general, since the prior for $\theta_S$, given $S$, is normal, it is possible to analytically integrate out $\theta_S$ to obtain a marginal posterior distribution\footnote{The expression for $\pi^n(S)$ given in Section~4.1 of \citet{martin.mess.walker.eb} has a typo, but the correct formula is given in the supplement at \url{https://arxiv.org/abs/1406.7718}.} for the configuration $S$, i.e., 
\begin{equation}
\label{eq:S.post}
\pi^n(S) \propto \pi(S) (1 + \alpha \tau^{-1} )^{-|S|/2} e^{-\frac{\alpha}{2\sigma^2} \|Y_{S^c}\|^2}. 
\end{equation}
In situations where only characteristics of the marginal posterior of $S$ are needed, e.g., for feature selection, certain credible sets (see Section~\ref{S:coverage}), etc, then the above formula can be used to design a straightforward Metropolis--Hastings algorithm to sample $S$; \citet{ecap} adopt a shotgun stochastic search based on \eqref{eq:S.post}.  Since $\theta_S$, given $S$, is normal, the Metropolis--Hastings procedure can be easily augmented to sample both $S$ and $\theta_S$.  

With the beta--binomial prior, there is additional structure that can be used to design a posterior sampling algorithm.  That is, by introducing that latent variable ``$W$,'' the full conditionals---$\theta \mid W$ and $W \mid \theta$---are available in closed-form, hence a Gibbs sampler, as in \citet{martin.walker.eb}, can be readily employed.

\subsection{Asymptotic concentration properties}
\label{SS:concentration}

Let $\theta^\star \in \RR^n$ denote the true mean vector, with configuration $S^\star = S_{\theta^\star}$, where the notation ``$S_\theta$'' refers to the configuration of the vector $\theta$.  We assume that $\theta^\star$ is sparse and, in particular, $s^\star = |S^\star|$ is $o(n)$ as $n \to \infty$.  The results reviewed below show that the posterior, $\Pi^n$, is able to optimally identify both $\theta^\star$ and $S^\star$ as $n \to \infty$.  

The only aspect of the model described above that is not completely determined is the prior $f_n$ on the configuration size.  The key is that the tails of $f_n$ have a certain rate of decay, as described in Equation~(2.2) of \citet{castillo.vaart.2012} and Assumption~1 of \citet{castillo.schmidt.vaart.reg}.  In particular, we assume here that 
\begin{equation}
\label{eq:fn}
K_1 n^{-a_1} \leq \frac{f_n(s)}{f_n(s-1)} \leq K_2 n^{-a_2}, \quad s=1,\ldots,n, 
\end{equation}
where $(K_1,K_2)$ and $(a_1,a_2)$ are suitable constants; in particular $a_1 > a_2$.  This implies that $f_n(s+t)/f_n(s)$ is lower and upper bounded by $(K_jn^{-a_j})^t$ for $j=1,2$, respectively, which is critical for the existing methods of proof.  Clearly, the complexity prior \eqref{eq:complexity} satisfies this and, moreover, any prior that does so must be similar to that in \eqref{eq:complexity}.  It turns out that the beta--binomial prior also satisfies \eqref{eq:fn} if $b_n \propto n^\xi$ for some $\xi > 1$.  

\begin{theorem}
\label{thm:rate}
If the prior for the configuration size satisfies \eqref{eq:fn}, then the posterior $\Pi^n$ adaptively attains the minimax concentration rate, i.e., for a constant $M' > 0$, 
\[ \sup_{\theta^\star: |S_{\theta^\star}| = s^\star} \E_{\theta^\star} \Pi^n(\{\theta \in \RR^n: \|\theta-\theta^\star\|^2 > M' s^\star \log(n / s^\star)\}) \to 0, \quad n \to \infty. \]
\end{theorem}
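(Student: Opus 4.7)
The plan is to apply the standard two-step decomposition used for empirical-prior posteriors in \citet{martin.walker.eb} and \citet{castillo.vaart.2012}, exploiting the explicit integrated marginal \eqref{eq:S.post} together with the fact that, conditional on $S$, the posterior for $\theta_S$ is $\nm_{|S|}(Y_S, \sigma^2(\tau+\alpha)^{-1} I_{|S|})$. For a suitable constant $K > 1$, I would write
\[ \{\|\theta - \theta^\star\|^2 > M' s^\star \log(n/s^\star)\} \subseteq \{|S| > K s^\star\} \cup \bigl(\{|S| \le K s^\star\} \cap \{\|\theta - \theta^\star\|^2 > M' s^\star \log(n/s^\star)\}\bigr), \]
reducing matters to (i) bounding the posterior mass on oversized configurations and (ii) a Gaussian concentration argument for $\theta_S \mid S, Y$ on configurations of controlled size.

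For step (i), I would bound $\pi^n(|S| > K s^\star)$ by the ratio in \eqref{eq:S.post}, lower-bounding the denominator by the single $S = S^\star$ term. Applying \eqref{eq:fn} to bound $f_n(|S|)/f_n(s^\star) \leq (K_2 n^{-a_2})^{|S|-s^\star}$ and summing over configurations of fixed size, together with chi-square moment bounds for the likelihood factor
\[ \exp\Bigl\{\tfrac{\alpha}{2\sigma^2}\bigl(\|Y_{S\setminus S^\star}\|^2 - \|Y_{S^\star \setminus S}\|^2\bigr)\Bigr\} \]
arising from $\|Y_{S^c}\|^2 - \|Y_{(S^\star)^c}\|^2 = \|Y_{S^\star \setminus S}\|^2 - \|Y_{S\setminus S^\star}\|^2$, gives a geometric sum in $|S| - s^\star$ that vanishes for $K$ large. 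For step (ii), on $\{|S| \le K s^\star\}$ one uses $\|\theta - \theta^\star\|^2 \le 2\|\theta_S - Y_S\|^2 + 2\|\eps_S\|^2 + \|\theta^\star_{S^c}\|^2$: Gaussian concentration gives $\|\theta_S - Y_S\|^2 = O(|S|) = O(s^\star)$ with high posterior probability, and $\|\eps_S\|^2 = O(s^\star)$ with high $\prob_{\theta^\star}$-probability, so both are absorbed into $M' s^\star \log(n/s^\star)$.

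The hard part is the residual bias $\|\theta^\star_{S^c}\|^2$ when $S$ omits true coordinates. It is tamed by the same posterior-ratio argument as in step (i): each omitted $j \in S^\star \setminus S$ contributes $Y_j^2 \approx \theta_j^{\star 2}$ to $\|Y_{S^c}\|^2$, producing an exponential penalty $\exp\{-\tfrac{\alpha}{2\sigma^2}\theta_j^{\star 2}\}$ in \eqref{eq:S.post} that exponentially suppresses configurations dropping large signals. The delicate step is the blockwise bookkeeping that balances this exponential penalty per omitted coordinate against the polynomial savings from the prior ratio per omitted index, so that the worst residual bias has order $s^\star \log(n/s^\star)$; this is essentially Lemmas~4--5 of \citet{castillo.vaart.2012}, adapted here to the fractional likelihood $L_n^\alpha$ of \eqref{eq:post}, whose only effect is to rescale the effective posterior variance. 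Because every bound depends on $\theta^\star$ only through $s^\star$ or through uniform chi-square controls, the supremum over $\{\theta^\star : |S^\star| = s^\star\}$ is automatic.
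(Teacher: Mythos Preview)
Your outline is correct and matches the approach the paper defers to: the paper's own proof is a one-line citation of \citet{martin.mess.walker.eb}, and what you have sketched is precisely the machinery developed there (specialized from regression to the orthogonal-design means problem). The two-step decomposition into dimension control via $\pi^n(|S|>Ks^\star)\to 0$ and conditional Gaussian concentration on $\{|S|\le Ks^\star\}$, with the denominator lower-bounded by the single $S=S^\star$ term in \eqref{eq:S.post}, is exactly the structure of Theorems~2--3 in that reference.

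One small remark on presentation: you invoke Lemmas~4--5 of \citet{castillo.vaart.2012} for the residual-bias bookkeeping, but the empirical-prior argument in \citet{martin.mess.walker.eb} does not actually route through those lemmas. Because the conditional prior is centered at $Y_S$, the integrated marginal \eqref{eq:S.post} already has the clean form $\pi(S)\,z^{|S|}\exp\{-\tfrac{\alpha}{2\sigma^2}\|Y_{S^c}\|^2\}$, so the ratio $\pi^n(S)/\pi^n(S^\star)$ directly exhibits the penalty $\exp\{-\tfrac{\alpha}{2\sigma^2}\|Y_{S^\star\setminus S}\|^2\}$ for omitted signals without any additional testing or sieve construction; the chi-square moment-generating-function bound (valid because $\alpha<1$) then handles the compensating $\exp\{\tfrac{\alpha}{2\sigma^2}\|Y_{S\setminus S^\star}\|^2\}$ term. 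This is slightly more direct than the Castillo--van der Vaart route, but the mechanism you describe---balancing per-index exponential penalties against polynomial prior savings so the worst residual bias is $O(s^\star\log(n/s^\star))$---is exactly the same idea, and your outline would go through as written.
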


\begin{proof}
Follows from the arguments in \citet{martin.mess.walker.eb}.  
\end{proof}

Besides concentration of the posterior around $\theta^\star$, it is interesting to consider the posterior for the configuration.  Ideally, it would put all of its mass on $S^\star$, asymptotically, but this requires some conditions on the magnitudes of the non-zero means; see Result~3 in Theorem~\ref{thm:selection} below.  These selection results will be used in Section~\ref{S:coverage}.  

\begin{theorem}
\label{thm:selection}
For the pair $a_1 > a_2$ in \eqref{eq:fn}, assume that $s^\star n^{-a_2} \to 0$
\begin{enumerate}
\item {\em No proper supersets.} $\E_{\theta^\star} \Pi^n(\{\theta: S_\theta \supset S^\star\}) \to 0$.
\item {\em No important misses.} Define the threshold
\begin{equation}
\label{eq:bound}
\rho_n = \Bigl\{ \frac{2\sigma^2 (1+\alpha) M \log n}{\alpha} \Bigr\}^{1/2}
\end{equation}
where $M = \max\{M', 1+a_1\}$ for the constant $M'$ as in Theorem~\ref{thm:rate}.  Define $S^\dagger = \{i: |\theta_i^\star| > \rho_n\} \subseteq S^\star$; then $\E_{\theta^\star} \Pi^n(\{\theta: S_\theta \not \supseteq S^\dagger\}) \to 0$.  
\item {\em Selection consistency.} If $S^\dagger=S^\star$, i.e., if all the non-zero means have magnitude larger than $\rho_n$ in \eqref{eq:bound}, then $\E_{\theta^\star} \pi^n(S^\star) \to 1$.  
\end{enumerate}
\end{theorem}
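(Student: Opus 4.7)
The plan is to work with the explicit marginal posterior \eqref{eq:S.post} and use, for any collection $\A$ of configurations and any reference $S_0$, the deterministic bound
\[
\Pi^n(\A) \;\leq\; \sum_{S \in \A} \frac{\pi^n(S)}{\pi^n(S_0)},
\]
valid since $\pi^n(S_0) \leq \sum_S \pi^n(S)$. The only random ingredient in $\pi^n(S)$ is $e^{-\alpha\|Y_{S^c}\|^2/(2\sigma^2)}$, and the key moment identity is that for any $J \subseteq (S^\star)^c$ the vector $Y_J$ is pure noise, so
\[
\E_{\theta^\star}\,e^{\alpha\|Y_J\|^2/(2\sigma^2)} \;=\; (1-\alpha)^{-|J|/2},
\]
finite because $\alpha \in (0,1)$. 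This identity, combined with the geometric tails of $f_n$ from \eqref{eq:fn}, drives all three parts.

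For Part~1, take $S_0 = S^\star$ and write $S = S^\star \cup T$ with $\emptyset \neq T \subseteq (S^\star)^c$. Since $S^c = (S^\star)^c \setminus T$, the random factor in $\pi^n(S)/\pi^n(S^\star)$ is $e^{\alpha\|Y_T\|^2/(2\sigma^2)}$, whose expectation is $(1-\alpha)^{-|T|/2}$. Using $f_n(s^\star+t)/f_n(s^\star) \leq (K_2 n^{-a_2})^t$ from \eqref{eq:fn} together with the combinatorial identity $\binom{n-s^\star}{t}\binom{n}{s^\star}/\binom{n}{s^\star+t} = \binom{s^\star+t}{t}$, and summing over $t \geq 1$ and $T$ of size $t$, bounds $\E_{\theta^\star}\Pi^n(\{S \supsetneq S^\star\})$ by $O(s^\star n^{-a_2})$, which vanishes under the hypothesis $s^\star n^{-a_2} \to 0$.

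For Part~2, fix $i \in S^\dagger$ and, for any $S$ with $i \notin S$, compare with $S \cup \{i\}$:
\[
\frac{\pi^n(S)}{\pi^n(S\cup\{i\})} \;=\; \frac{\pi(S)}{\pi(S\cup\{i\})}\,(1+\alpha/\tau)^{1/2}\,e^{-\alpha Y_i^2/(2\sigma^2)}.
\]
By \eqref{eq:fn}, the prior ratio is $O(n^{1+a_1})$, while on the high-probability event $\{|\eps_i| \leq c\sigma\sqrt{\log n}\}$ (for a fixed, small enough $c$) we have $Y_i^2 \geq (1-o(1))\rho_n^2$, so the exponential factor is at most $n^{-(1+\alpha)M(1-o(1))}$. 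Since $M \geq 1+a_1$, the net ratio is a vanishing polynomial in $n$, uniformly in $S \not\ni i$. The bijection $S \mapsto S \cup \{i\}$ on $\{S : i \notin S\}$ then forces $\Pi^n(\{S : i \notin S\}) = o(1)$ in expectation, and a union bound over the at most $s^\star$ coordinates of $S^\dagger$ (absorbed by the polynomial slack together with a standard Gaussian tail inequality for the $\eps_i$'s) yields Part~2.

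Part~3 is immediate: when $S^\dagger = S^\star$, the complement $\{S \neq S^\star\}$ is the union of $\{S \supsetneq S^\star\}$ and $\{S \not\supseteq S^\star\}$, both of which vanish in expectation by Parts~1 and~2, so $\E_{\theta^\star}\pi^n(S^\star) \to 1$. The main obstacle is the precise calibration in Part~2: the threshold $\rho_n$ in \eqref{eq:bound} with $M = \max\{M',1+a_1\}$ must be tuned so that the signal gain in $Y_i^2$ simultaneously beats the polynomial prior penalty $n^{1+a_1}$, the variance-inflation factor $(1+\alpha/\tau)^{1/2}$, the Gaussian noise slack, and the union bound over $S^\dagger$; the restriction $\alpha < 1$ built into \eqref{eq:post} is essential both here (to keep the gain strictly dominant) and in Part~1 (for the moment identity).
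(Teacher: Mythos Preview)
Your Parts~1 and~3 are fine and in the spirit of the paper; the paper simply cites \citet{martin.mess.walker.eb} for Part~1 and combines Parts~1--2 for Part~3, just as you do.

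Your Part~2, however, follows a genuinely different route than the paper, and the ``absorbed by the polynomial slack'' step hides a real gap. The paper does \emph{not} argue coordinate-by-coordinate: it fixes the single reference $S^\dagger$, writes
\[
\frac{\pi^n(S)}{\pi^n(S^\dagger)} \;=\; \frac{\pi(S)}{\pi(S^\dagger)}\,z^{|S|-s^\dagger}\,
e^{\frac{\alpha}{2\sigma^2}\{\|Y_{S\cap S^{\dagger c}}\|^2 - \|Y_{S^c\cap S^\dagger}\|^2\}},
\]
takes the expectation in one shot via the (non)central chi-square MGFs, and then sums over all $S\not\supseteq S^\dagger$ with $|S|\le Cs^\star$, parametrizing by $s=|S|$ and $t=|S\cap S^\dagger|$. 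The crucial gain is that the non-central piece contributes $(n^M)^{-|S^c\cap S^\dagger|}$, i.e., an $n^{-M}$ factor for \emph{each} missed important signal, which is what beats the combinatorial count $\binom{s^\dagger}{s^\dagger-t}$ of ways to miss those signals.

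Your bijection argument, by contrast, compares $S$ only to $S\cup\{i\}$ and therefore earns a single $n^{-(1+\alpha)M}$ (on the good event) regardless of how many other coordinates of $S^\dagger$ are missed. Summing over $i\in S^\dagger$ then costs you a multiplicative $s^\dagger\le s^\star$, and the theorem only gives $s^\star=o(n^{a_2})$. Concretely, on the good event your per-coordinate bound is of order $n^{\,1+a_1-(1+\alpha)M(1-c/C_1)^2}$ with $C_1^2=2(1+\alpha)M/\alpha$; at the minimal $M=1+a_1$ the exponent is essentially $-\alpha(1+a_1)$, so the union bound needs $\alpha(1+a_1)\ge a_2$, which is \emph{not} implied by the hypotheses (take, e.g., $\alpha$ small). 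The competing constraint $c\ge\sqrt{2a_2}$ from the Gaussian tail union bound makes the window for $c$ empty in such regimes. In short, your single-signal comparison is exactly the device the paper uses later (proof of Theorem~\ref{thm:medium}) for one fixed index, but it does not bootstrap to all of $S^\dagger$ under the stated assumptions; the paper's all-at-once comparison to $S^\dagger$ is what sidesteps the union-bound loss.
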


\begin{proof}
Part~1 is a consequence of the analysis in \citet{martin.mess.walker.eb}; a proof of Part~2 is given in Appendix~\ref{S:proofs} below.  Part~3 follows immediately from Parts~1--2.
\end{proof}

\section{Uncertainty quantification}
\label{S:coverage}

\subsection{Bernstein--von Mises phenomenon}

The key to proving that the posterior uncertainty quantification is valid is establishing some suitable approximation of the posterior in terms of  familiar quantities whose distribution is known.  A Bernstein--von Mises theorem, for example, would show that the posterior is approximately normal.  Therefore, posterior credible sets must closely resemble the corresponding sets for the normal approximation, and properties of the mean and variance of the normal distribution can be used to obtain coverage results.  In our present case, under suitable conditions, a Bernstein--von Mises theorem is relatively easy to establish, and it says that the posterior $\Pi^n$ can be approximated, asymptotically, by a product of normal distribution on the true configuration, $S^\star$, and a point mass at 0 on its complement, $S^{\star c}$.  Compare this result to Theorem~6 in \citet{castillo.schmidt.vaart.reg}.  

\begin{theorem}
\label{thm:bvm}
Let $d_{\text{\sc tv}}$ denote the total variation distance and $S^\star = S_{\theta^\star}$.  Then 
\begin{equation}
\label{eq:bvm}
\E_{\theta^\star} d_{\text{\sc tv}}\bigl( \Pi^n, \nm_{|S^\star|}(Y_{S^\star}, v_\alpha I_{|S^\star|}) \otimes \delta_{S^{\star c}} \bigr) \to 0
\end{equation}
for all $\theta^\star$ such that $\E_{\theta^\star} \pi^n(S^\star) \to 1$, where $v_\alpha = \sigma^2(\alpha + \tau)^{-1}$ and $\delta_{S^{\star c}}$ denotes the point mass distribution for $\theta_{S^{\star c}}$ concentrated at the origin.  
\end{theorem}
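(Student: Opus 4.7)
The plan is to exploit the exact conjugate structure of the empirical prior. Because the conditional prior $\nm_{|S|}(Y_S, \sigma^2 \tau^{-1} I_{|S|})$ is centered at precisely the $Y_S$ around which the $\alpha$-powered Gaussian likelihood is also centered (on the event $\theta_{S^c}=0$), the posterior $\Pi^n$ admits a closed-form mixture representation whose $S = S^\star$ component is \emph{literally} the target measure in \eqref{eq:bvm}. Once this is recognized, the total variation distance collapses to the posterior mass placed off the true configuration, which the hypothesis $\E_{\theta^\star}\pi^n(S^\star)\to 1$ dispatches immediately.

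Concretely, I would first redo the conjugate calculation behind \eqref{eq:S.post}, this time retaining the conditional density of $\theta_S$: the product of the prior $\propto \exp\{-\tfrac{\tau}{2\sigma^2}\|Y_S-\theta_S\|^2\}$ and the $\alpha$-powered likelihood restricted to $\theta_{S^c}=0$ is proportional to $\exp\{-\tfrac{\alpha+\tau}{2\sigma^2}\|Y_S-\theta_S\|^2\}$, so that
\[
(\theta_S\mid S, Y)\sim \nm_{|S|}(Y_S, v_\alpha I_{|S|}),\qquad v_\alpha=\sigma^2(\alpha+\tau)^{-1},
\]
with $\theta_{S^c}$ pinned at the origin. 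Combining this with the marginal in \eqref{eq:S.post} yields the exact representation
\[
\Pi^n = \sum_S \pi^n(S)\, Q_S, \qquad Q_S := \nm_{|S|}(Y_S, v_\alpha I_{|S|}) \otimes \delta_{S^c}.
\]

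For the second step, write $Q^\star := Q_{S^\star}$ for the target. For $S \neq S^\star$, the measures $Q_S$ and $Q^\star$ are mutually singular (their supports are distinct coordinate hyperplanes on which the nontrivial marginals are continuous), so $d_{\text{\sc tv}}(Q_S,Q^\star)=1$, while $d_{\text{\sc tv}}(Q_{S^\star},Q^\star)=0$. Applying joint convexity of total variation to the common-weights mixtures $\Pi^n = \sum_S \pi^n(S) Q_S$ and $Q^\star = \sum_S \pi^n(S) Q^\star$ gives the pointwise (in $Y$) bound
\[
d_{\text{\sc tv}}(\Pi^n, Q^\star) \le \sum_S \pi^n(S)\, d_{\text{\sc tv}}(Q_S, Q^\star) = 1 - \pi^n(S^\star),
\]
and taking $\E_{\theta^\star}$ finishes the proof. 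There is essentially no obstacle: the argument is algebraic and the selection-consistency hypothesis carries all the analytic weight. The structural point worth highlighting is that the data-driven centering at $Y_S$ aligns the prior mode with the conditional MLE, so the posterior conditional mean on $S$ is exactly $Y_S$ rather than a shrinkage combination of prior mean and MLE; consequently no $O(\tau/(\alpha+\tau))$ bias term appears, and the Bernstein--von Mises approximation is in fact exact on the event $\{S=S^\star\}$. This is what makes the result cleaner than its fixed-prior counterparts such as Theorem~6 in \citet{castillo.schmidt.vaart.reg}.
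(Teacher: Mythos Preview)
Your proof is correct and follows essentially the same route as the paper: both arguments write $\Pi^n$ as the mixture $\sum_S \pi^n(S)\,Q_S$, observe that the $S=S^\star$ component matches the target exactly, and bound the total variation by the posterior mass off $S^\star$. Your use of convexity together with the mutual singularity of $Q_S$ and $Q_{S^\star}$ for $S\neq S^\star$ gives the slightly sharper bound $1-\pi^n(S^\star)$ in place of the paper's $2\{1-\pi^n(S^\star)\}$, but this is cosmetic.
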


\begin{proof}
Let $D_n=D_n(Y)$ denote the total variation distance in \eqref{eq:bvm}.  By definition, 
\begin{equation}
\label{eq:tv}
D_n = \sup_A \bigl| \Pi^n(A) - \{\nm_{|S^\star|}(Y_{S^\star}, v_\alpha I_{|S^\star|}) \otimes \delta_{S^{\star c}}\}(A) \bigr|, 
\end{equation}
where the supremum is over all Borel measurable subsets of $\RR^n$.  Next, again by definition, 
\[ \Pi^n(A) = \sum_S \pi^n(S) \{\nm_{|S^\star|}(Y_{S^\star}, v_\alpha I_{|S^\star|}) \otimes \delta_{S^{\star c}}\}(A), \]
so we can immediately upper-bound the absolute value on the right-hand side of \eqref{eq:tv} by 
\[ \sum_S \pi^n(S) \bigl| \{\nm_{|S|}(Y_S, v_\alpha I_{|S|}) \otimes \delta_{S^c}\}(A) - \{\nm_{|S^\star|}(Y_{S^\star}, v_\alpha I_{|S^\star|}) \otimes \delta_{S^{\star c}}\}(A) \bigr|. \]
The absolute difference above is 0 when $S=S^\star$ and bounded by 2 otherwise, so 
\[ D_n \leq 2 \sum_{S \neq S^\star} \pi^n(S) = 2 \{1 - \pi^n(S^\star)\}. \]
After taking expectation of both sides, if $\theta^\star$ is such that $\E_{\theta^\star} \pi^n(S^\star) \to 1$, then the upper bound vanishes, thus proving the theorem.
\end{proof}

\begin{remark}
\label{re:alpha}
Recall that $v_\alpha = \sigma^2(\alpha + \tau)^{-1}$.  Therefore, in order for the variance in the posterior distribution to be at least as large as the variance of $Y_{S^\star}$, we need $v_\alpha \geq \sigma^2$ or, equivalently, $\alpha + \tau \leq 1$.  The other properties of the posterior require $\alpha < 1$ and $\tau > 0$ so we do not have the option to choose $\alpha=1$ and $\tau=0$ at this point.  We do have the option to take $\tau=1-\alpha$, in which case $v_\alpha = \sigma^2$.  However, in the spirit of conservatism and validity, we prefer to take $\alpha + \tau$ slightly less than 1.  This agrees with \citet{grunwald.mehta.rates} who say that Bayesian inference is ``safe'' if the learning rate---their version of $\alpha$---is {\em strictly less than} a critical threshold.  In particular, for our simulations in Section~\ref{S:examples}, we take $\alpha=0.95$ and $\tau = 0.025$, so that $\alpha + \tau = 0.975 < 1$.
\end{remark}


\subsection{Credible sets and their coverage properties}

The literature on Bayesian uncertainty quantification in sparse high-dimensional problems has focused primarily on the coverage of $\ell_2$-balls centered at the posterior mean.  Here we consider a different and arguably more practical question concerning the coverage of marginal credible intervals for certain one-dimensional features of $\theta$.  In particular, we consider a general linear functional $\psi = x^\top \theta$, where $x \in \RR^n$ is a specified vector.  This includes, as an important special case, inference about an individual mean, say, $\theta_1$, and we expect that other functionals of $\theta$, which are only approximately linear, could likely be handled in a similar way.  

Without loss of generality, we consider coverage probability of upper credible bounds for $\psi$.  Towards this, we need its marginal posterior distribution function, call it $H_n(t) = \Pi^n(\{\theta: x^\top \theta \leq t\})$, which is given by 
\begin{equation}
\label{eq:mixture}
H_n(t) = \sum_S \pi^n(S) \, \Pi^n(\{\theta: x_S^\top \theta_S \leq t\} \mid S) = \sum_S \pi^n(S) \, \Phi\Bigl( \frac{t - \hat\psi_S}{v_\alpha^{1/2} \|x_S\|} \Bigr), 
\end{equation}
where $\Phi$ is the standard normal distribution function, $x_S$ is the $|S|$-vector corresponding to configuration $S$, $\hat\psi_S = x_S^\top Y_S$, and $v_\alpha = \sigma^2(\alpha + \tau)^{-1}$ as before.  

For the special case where $x=e_k$, the $k^\text{th}$ standard basis vector, this reduces to 
\begin{equation}
\label{eq:Hn}
H_n(t) = (1-p_k^n) \, 1_{[0,\infty)}(t) + p_k^n \, \Phi\bigl( v_\alpha^{-1/2}(t-Y_k) \bigr), 
\end{equation}
where $p_k^n = \pi^n(S \ni k)$ is the {\em posterior inclusion probability} of index $k$, i.e., the posterior probability that the configuration contains the particular index $k$, a quantity often used in Bayesian feature selection problems \citep[e.g.,][]{barbieri.berger.2004}.  The above expression reveals that the marginal posterior of $\theta_k$ is, as expected, a mixture of a point mass at zero and a normal distribution centered at $Y_k$.  The only setback is that the inclusion probability, $p_k^n$, is a complicated function of the full data.  

Towards uncertainty quantification, fix a nominal confidence level $\gamma \in (0,\frac12)$.  Then the $100(1-\gamma)$\% credible upper bound for $\psi=x^\top\theta$, denoted by $t_\gamma$, is given by 
\[ t_\gamma = \inf\{t: H_n(t) \geq 1-\gamma\}. \]
The somewhat complicated expression here is to handle the discontinuity in $H_n$ at the origin.  That is, if the inclusion probability is small, there will be a large jump at the origin, in which case, $H_n(t)=1-\gamma$ has no solution, but the infimum ensures credible upper bound will be $t_\gamma = 0$.  Otherwise, if there is no large jump at the origin, then $H_n(t) = 1-\gamma$ has a unique solution, and that would be $t_\gamma$.  Note that this credible bound is precisely the one that practitioners would read off from the corresponding quantiles of the posterior samples, different from the marginal posterior mean plus an inflated standard error as studied in \citet{pas.szabo.vaart.uq} and elsewhere.  

Of course, the credible bound $t_\gamma=t_\gamma(Y)$ depends on the data, so, for the posterior uncertainty quantification to be valid in a frequentist sense, the relevant property is that the coverage probability is close to the nominal $1-\gamma$ level, i.e., 
\begin{equation}
\label{eq:coverage}
\prob_{\theta^\star}\bigl( t_\gamma \geq x^\top \theta^\star \bigr) \geq 1-\gamma + o(1), \quad n \to \infty. 
\end{equation}
It turns out that this coverage probability property is a more-or-less immediate consequence of the Bernstein--von Mises phenomenon.


\begin{cor}
\label{cor:coverage}
If $\alpha + \tau \leq 1$, so that $v_\alpha \geq \sigma^2$ then \eqref{eq:bvm} implies \eqref{eq:coverage} and, therefore, valid uncertainty quantification. 
\end{cor}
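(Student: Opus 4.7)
The plan is to push the Bernstein--von Mises statement of Theorem~\ref{thm:bvm} through the linear functional $\psi = x^\top\theta$ and reduce \eqref{eq:coverage} to a routine coverage calculation for a single Gaussian. Since total-variation distance is monotone under measurable pushforwards, if $H_n^\star$ denotes the marginal CDF of $\psi$ under the approximating measure $\nm_{|S^\star|}(Y_{S^\star}, v_\alpha I_{|S^\star|}) \otimes \delta_{S^{\star c}}$, which is a normal with mean $x_{S^\star}^\top Y_{S^\star}$ and variance $v_\alpha\|x_{S^\star}\|^2$, then $\sup_t |H_n(t) - H_n^\star(t)| \leq D_n$, where $D_n$ is the total variation distance appearing in \eqref{eq:bvm} and $\E_{\theta^\star} D_n \to 0$. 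The degenerate case $\|x_{S^\star}\| = 0$ (which includes $S^\star = \emptyset$) is essentially trivial: $x^\top\theta^\star = 0$, $H_n^\star$ is a point mass at $0$, and the TV convergence forces the posterior probability of $\{\psi \leq 0\}$ close to one, giving $t_\gamma \geq 0$ with high probability; below I focus on $\|x_{S^\star}\| > 0$.

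Next, I would compute the frequentist coverage of the ``approximate'' credible bound $t_\gamma^\star = x_{S^\star}^\top Y_{S^\star} + v_\alpha^{1/2}\|x_{S^\star}\|\, z_{1-\gamma}$, the $(1-\gamma)$-quantile of $H_n^\star$. Since $\theta^\star$ vanishes on $S^{\star c}$, $x_{S^\star}^\top Y_{S^\star}$ is $\nm(x^\top\theta^\star, \sigma^2\|x_{S^\star}\|^2)$ under $\prob_{\theta^\star}$, so a direct standardization gives, for any $\delta \geq 0$,
\[
\prob_{\theta^\star}\bigl(t_\gamma^\star \geq x^\top\theta^\star + \delta\bigr) = \Phi\bigl( (v_\alpha/\sigma^2)^{1/2} z_{1-\gamma} - \delta/(\sigma\|x_{S^\star}\|) \bigr).
\]
The hypothesis $v_\alpha \geq \sigma^2$ together with $\gamma < \tfrac12$ makes the $\delta=0$ value at least $\Phi(z_{1-\gamma}) = 1-\gamma$, and for small $\delta > 0$ this quantity deviates from $1-\gamma$ by only $o(1)$ as $\delta \downarrow 0$.

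The remaining step is to transfer the coverage from $t_\gamma^\star$ to the actual credible bound $t_\gamma$ using the uniform CDF bound above. Fix $\delta > 0$ and set $\eta := 1-\gamma - \Phi(z_{1-\gamma} - \delta/(v_\alpha^{1/2}\|x_{S^\star}\|)) > 0$, a deterministic positive constant depending on $(\delta, x, S^\star)$ only. On the event $\{D_n < \eta\}$ we have $H_n(t_\gamma^\star - \delta) < H_n^\star(t_\gamma^\star - \delta) + \eta = 1-\gamma$, and the infimum definition of $t_\gamma$ then yields $t_\gamma \geq t_\gamma^\star - \delta$. Markov's inequality together with $\E_{\theta^\star} D_n \to 0$ gives $\prob_{\theta^\star}(D_n \geq \eta) \to 0$, so
\[
\liminf_{n\to\infty}\prob_{\theta^\star}(t_\gamma \geq x^\top\theta^\star) \geq \liminf_{n\to\infty}\prob_{\theta^\star}(t_\gamma^\star \geq x^\top\theta^\star + \delta) = \Phi\bigl((v_\alpha/\sigma^2)^{1/2} z_{1-\gamma} - \delta/(\sigma\|x_{S^\star}\|)\bigr).
\]
Letting $\delta \downarrow 0$ then yields the bound $\geq \Phi((v_\alpha/\sigma^2)^{1/2} z_{1-\gamma}) \geq 1-\gamma$, which is exactly \eqref{eq:coverage}. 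The only delicate point is the ordering of the limits: $\delta$ and its associated $\eta = \eta(\delta)$ must be held fixed while $n \to \infty$, and $\delta$ is sent to zero only afterwards; this is what prevents the pointwise TV--to--quantile transfer from being circular.
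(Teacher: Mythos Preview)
Your proof is correct and follows the same line as the paper: push the Bernstein--von Mises approximation through the linear map $\theta \mapsto x^\top\theta$, obtain total-variation closeness of $H_n$ to the Gaussian CDF $H_n^\star$, and then read off coverage from the familiar normal quantile $t_\gamma^\star$. The paper's version is terser---it simply asserts that TV convergence forces the quantiles to agree---whereas you spell out the $\delta$--$\eta$ transfer explicitly; one small refinement is to parametrize the shift in standardized units, $c = \delta/(v_\alpha^{1/2}\|x_{S^\star}\|)$, so that $\eta = 1-\gamma - \Phi(z_{1-\gamma}-c)$ is genuinely $n$-free even when $\|x_{S^\star}\|$ varies with $n$.
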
 

\begin{proof}
Intuitively, if $\theta \sim \Pi^n$ is approximately normal, then $\psi=x^\top \theta$ is approximately normal too, with mean $\hat\psi_{S^\star}$ and variance $v_\alpha \|x_{S^\star}\|^2$.  More formally, an argument identical to that in the proof of Theorem~\ref{thm:bvm} gives 
\[ \E_{\theta^\star} d_{\text{\sc tv}}\bigl( \Pi_\psi^n, \nm(\hat\psi_{S^\star}, v_\alpha \|x_{S^\star}\|^2) \bigr) \to 0, \]
where $\Pi_\psi^n$ is the derived posterior distribution of $\psi = x^\top \theta$.  This convergence in total variation implies that the posterior quantile, $t_\gamma$, asymptotically agrees with the corresponding quantile for $\nm(\hat\psi_{S^\star}, v_\alpha \|x_{S^\star}\|^2)$.  The latter quantile and its distributional properties are familiar and, therefore, \eqref{eq:bvm} implies \eqref{eq:coverage} if $v_\alpha \geq 1$. 
\end{proof}

To summarize so far, if the posterior distribution can correctly identify $S^\star$, the true configuration, i.e., if $\pi^n(S^\star) \to 1$, then the corresponding posterior uncertainty quantification about any linear functional will be valid in a frequentist sense.  We know, from Theorem~\ref{thm:selection}, that if all the non-zero means have magnitude exceeding the threshold $\rho_n$ in \eqref{eq:bound}, then $\pi^n(S^\star) \to 1$.  But it is worth asking if this can be established under weaker conditions, at least in certain cases.  One such case, which is practically relevant, is a marginal credible interval for $\psi=\theta_k$, say, and we ask how large does $|\theta_k^\star|$ need to be in order for the marginal posterior uncertainty quantification to be valid.  
This is directly related to the numerical experiments considered in Section~\ref{S:examples} below.  

At least intuitively, for the marginal posterior for $\psi=\theta_k$, whether the uncertainty quantification is valid hinges entirely on the behavior of the inclusion probability.  For example, if $\theta_k^\star \neq 0$, and if $p_k^n \to 1$, then the coverage probability of the $100(1-\gamma)$\% marginal posterior credible upper bound should be approximately $1-\gamma$.  The following theorem confirms this intuition.   

\begin{lemma}
\label{lem:incl.prob}
The $100(1-\gamma)$\% marginal posterior credible interval for $\theta_k$ is valid, i.e., the frequentist coverage probability is at least $1-\gamma$ for sufficiently large $n$, if $\theta_k^\star = 0$ and $\E_{\theta^\star}(p_k^n) \to 0$ or if $\theta_k^\star \neq 0$ and $\E_{\theta^\star}(1-p_k^n) \to 0$.  
\end{lemma}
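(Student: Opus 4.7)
The plan is to reduce everything to the explicit mixture form \eqref{eq:Hn} of $H_n$ together with Markov-type tail bounds on $p_k^n$. Because $H_n$ is non-decreasing and $t_\gamma$ is its left-continuous $(1-\gamma)$-quantile, one has $\{H_n(\theta_k^\star -) < 1-\gamma\} \subseteq \{t_\gamma \geq \theta_k^\star\}$, so it suffices to bound the left-limit $H_n(\theta_k^\star -)$. The hypothesis $\E_{\theta^\star}(p_k^n) \to 0$ in case (a), and $\E_{\theta^\star}(1-p_k^n) \to 0$ in case (b), will enter only through Markov's inequality.

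For case (a), formula \eqref{eq:Hn} gives $H_n(0-) = p_k^n \Phi(-Y_k/\sqrt{v_\alpha}) \leq p_k^n$, so the coverage event contains $\{p_k^n < 1-\gamma\}$, whose complement has probability at most $\E_{\theta^\star}(p_k^n)/(1-\gamma) \to 0$; the coverage probability tends to $1$. In case (b) with $\theta_k^\star < 0$, $H_n$ is continuous at $\theta_k^\star$ and $H_n(\theta_k^\star -) = p_k^n \Phi((\theta_k^\star - Y_k)/\sqrt{v_\alpha}) \leq \Phi((\theta_k^\star - Y_k)/\sqrt{v_\alpha})$, so the coverage event contains $\{Y_k > \theta_k^\star - \sqrt{v_\alpha}\,\Phi^{-1}(1-\gamma)\}$; using $Y_k \sim \nm(\theta_k^\star, \sigma^2)$ together with the standing assumption $v_\alpha \geq \sigma^2$ (Remark~\ref{re:alpha}), this event has probability $\Phi(\sqrt{v_\alpha/\sigma^2}\,\Phi^{-1}(1-\gamma)) \geq 1-\gamma$. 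Notably, this subcase needs no control of $p_k^n$ at all.

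The real obstacle is case (b) with $\theta_k^\star > 0$, since then \eqref{eq:Hn} yields $H_n(\theta_k^\star -) = (1-p_k^n) + p_k^n \Phi((\theta_k^\star - Y_k)/\sqrt{v_\alpha}) \geq 1-p_k^n$, and this already exceeds $1-\gamma$ unless $p_k^n$ is close to $1$; here the jump at the origin is on the wrong side of $\theta_k^\star$. My plan is to localise onto the event $A_\epsilon = \{p_k^n \geq 1-\epsilon\}$ for small $\epsilon \in (0, 1-\gamma)$; by Markov, $\prob_{\theta^\star}(A_\epsilon^c) \leq \E_{\theta^\star}(1-p_k^n)/\epsilon \to 0$. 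On $A_\epsilon$, the inequality $H_n(\theta_k^\star -) < 1-\gamma$ rearranges to $Y_k > \theta_k^\star - \sqrt{v_\alpha}\,\Phi^{-1}(1-\gamma/p_k^n)$, which is in turn implied by $Y_k > \theta_k^\star - \sqrt{v_\alpha}\,\Phi^{-1}(1-\gamma/(1-\epsilon))$ via the monotonicity $p_k^n \geq 1-\epsilon$. A standard Gaussian calculation for $Y_k$ combined with $\prob_{\theta^\star}(A_\epsilon) \to 1$ then gives $\liminf_n \prob_{\theta^\star}(t_\gamma \geq \theta_k^\star) \geq \Phi\bigl(\sqrt{v_\alpha/\sigma^2}\,\Phi^{-1}(1-\gamma/(1-\epsilon))\bigr)$ for every such $\epsilon$, and letting $\epsilon \downarrow 0$ yields the desired lower bound $\Phi(\sqrt{v_\alpha/\sigma^2}\,\Phi^{-1}(1-\gamma)) \geq 1-\gamma$, completing the argument.
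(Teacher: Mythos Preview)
Your proof is correct and follows essentially the same strategy as the paper's: in the null case you bound $H_n$ near the origin by $p_k^n$ and apply Markov's inequality, and in the signal case you localise on the high-probability event $\{p_k^n \geq 1-\epsilon\}$ (the paper writes this as $\{p_k^n > b\}$), reduce to a Gaussian tail computation for $Y_k - \theta_k^\star$, and send $\epsilon \downarrow 0$ (respectively $b \uparrow 1$). The only cosmetic differences are that you work with the left limit $H_n(\theta_k^\star -)$ and the containment $\{H_n(\theta_k^\star -) < 1-\gamma\} \subseteq \{t_\gamma \geq \theta_k^\star\}$, which is a bit more careful about the quantile definition, and you explicitly split the $\theta_k^\star < 0$ subcase and note that no control of $p_k^n$ is needed there for the upper bound, whereas the paper absorbs this into a ``without loss of generality $\theta_k^\star > 0$'' by symmetry.
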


\begin{proof}
Start with the case $\theta_k^\star=0$.  If $t_\gamma$ is the corresponding upper credible limit, then the relevant coverage probability can be bounded as follows:
\begin{align*}
\prob_{\theta^\star}(t_\gamma \geq 0) & = \prob_{\theta^\star}\{H_n(0) \geq 1-\gamma) \\
& \geq \prob_{\theta^\star}(1 - p_k^n \geq 1-\gamma) \\
& \geq \gamma^{-1} \{ \E_{\theta^\star}(1-p_k^n) - (1-\gamma)\} \\
& = \gamma^{-1}\{ \gamma - \E_{\theta^\star}(p_k^n) \}.
\end{align*}
The first inequality above is due to the fact that $H_n(0) \geq 1-p_k^n$; the second is by the reverse Markov inequality.  By assumption, $\E_{\theta^\star}(p_k^n) \to 0$ and, therefore, the coverage probability converges to 1, as expected.  

Next, without loss of generality, we consider $\theta_k^\star > 0$.  In this case, the relevant coverage event is $\{H_n(\theta_k^\star) \leq 1-\gamma\}$ where, in this case, 
\[ H_n(\theta_k^\star) = (1-p_k^n) + p_k^n \, V_k, \]
with $V_k = \Phi\{ v_\alpha^{-1/2}(Y_k - \theta_k^\star)\}$.  Note, if $\alpha + \tau \leq 1$, then $v_\alpha \geq \sigma^2$, which implies that  $V_k$ has tails no thinner than $\unif(0,1)$, the uniform distribution between 0 and 1.  Below we show that the non-coverage probability is upper-bounded by $\gamma$ as $n \to \infty$.  

The non-coverage probability is 
\[ \prob_{\theta^\star}\{H_n(\theta_k^\star) > 1-\gamma\} = \prob_{\theta^\star}\{ p_k^n(1-V_k) < \gamma\}. \]
Intuitively, since $p_k^n$ will be close to 1, and $1-V_k$ has tails no heavier than $\unif(0,1)$, we expect the right-most probability above to be less than $\gamma$.  To make this formal, introduce a $b \in (0,1)$ and write 
\begin{align*}
\prob_{\theta^\star}\{ p_k^n (1-V_k) \leq \gamma \} & \leq \prob_{\theta^\star}\{1-V_k < \tfrac{\gamma}{b} \text{ and } p_k^n > b\} + \prob_{\theta^\star}\{ p_k^n \leq b\} \\
& \leq \prob_{\theta^\star}\{1-V_k < \tfrac{\gamma}{b}\} + \prob_{\theta^\star}\{ p_k^n \leq b\} \\
& \leq \tfrac{\gamma}{b} + \prob_{\theta^\star}\{p_k^n \leq b\}.
\end{align*}
By Markov's inequality, 
\[ \prob_{\theta^\star}\{p_k^n \leq b\} = \prob_{\theta^\star}\{1 - p_k^n \geq 1-b\} \leq (1-b)^{-1} \E_{\theta^\star}(1-p_k^n). \]
If $\E_{\theta^\star}(1-p_k^n) \to 0$, as we assumed, then we get 
\[ \limsup_{n \to \infty} \, \bigl[ \text{non-coverage probability} \bigr] \leq \tfrac{\gamma}{b}. \]
Since this holds for all $b \in (0,1)$, it holds for the infimum over $b$ so, therefore, the limiting non-coverage probability is no bigger than $\gamma$, which proves the claim. 
\end{proof}

To control the inclusion probability in the case $\theta_k^\star \neq 0$, it will be necessary to distinguish the means whose magnitudes exceed the cutoff $\rho_n$ in \eqref{eq:bound} and those that are just simply non-zero, so recall, from Section~\ref{SS:concentration}, $S^\dagger = \{i: |\theta_i^\star| > \rho_n\}$ and let $s^\dagger = |S^\dagger|$.  Here we are specifically considering the possibility that $|\theta_k^\star|$ is less than $\rho_n$, so we have $s^\dagger < s^\star$.  The following theorem gives a lower bound on the magnitude of $\theta_k^\star$ such that its inclusion probability, $p_k^n$, converges to 1, and that bound depends on how large $s^+$ is compared to $s^\star$.  And, in light of Lemma~\ref{lem:incl.prob}, this establishes how large the mean needs to be in order for the marginal posterior distribution to provide valid uncertainty quantification.  

\begin{theorem}
\label{thm:medium}
If the conditions of Theorem~\ref{thm:selection} hold, and if $|\theta_k^\star| > \zeta_n$, where 
\begin{equation}
\label{eq:zeta.n}
\zeta_n^2 = \tfrac{2\sigma^2(1+\alpha)}{\alpha} \bigl\{ a_1 \log n + \log\bigl(\tfrac{n-s^\dagger}{s^\dagger + 1}\bigr) + (s^\star - s^\dagger - 1) \log 2 \bigr\}, 
\end{equation}
and $s^\dagger < s^\star$ is the number of non-zero means have magnitude exceeding the threshold in \eqref{eq:bound}, then the inclusion probability satisfies $\E_{\theta^\star}(p_k^n) \to 1$ as $n \to \infty$ and, consequently, for $\alpha + \tau \leq 1$, the marginal posterior for $\theta_k$ provides valid uncertainty quantification.  
\end{theorem}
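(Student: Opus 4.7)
My plan is to invoke Lemma~\ref{lem:incl.prob} to reduce the coverage claim to showing $\E_{\theta^\star}(1 - p_k^n) \to 0$; note that the assumption $\alpha + \tau \leq 1$ gives $v_\alpha \geq \sigma^2$, which is exactly what the lemma requires. The case $k \in S^\dagger$ is immediate, since Theorem~\ref{thm:selection} Part~2 gives $\E_{\theta^\star} \pi^n(\{S : S \supseteq S^\dagger\}) \to 1$, and any such $S$ contains $k$. The interesting case, and the focus of the argument, is $k \in S^\star \setminus S^\dagger$, i.e., a non-zero mean with magnitude between $\zeta_n$ and $\rho_n$.

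I would decompose
\[
\E_{\theta^\star}(1 - p_k^n) \leq \E_{\theta^\star} \pi^n(\{S : S \not\supseteq S^\dagger\}) + \E_{\theta^\star} \pi^n(\{S : S \supseteq S^\dagger,\, k \notin S\}),
\]
where the first term is $o(1)$ by Theorem~\ref{thm:selection} Part~2. For the second, I would exploit the bijection $S \mapsto S \cup \{k\}$ between $\{S : S \supseteq S^\dagger,\, k \notin S\}$ and $\{S' : S' \supseteq S^\dagger \cup \{k\}\}$. Using the closed-form \eqref{eq:S.post}, the pointwise ratio simplifies to
\[
\frac{\pi^n(S)}{\pi^n(S \cup \{k\})} = \frac{f_n(|S|)}{f_n(|S|+1)} \cdot \frac{n-|S|}{|S|+1} \cdot (1 + \alpha\tau^{-1})^{1/2}\, e^{-\alpha Y_k^2/(2\sigma^2)}.
\]
By \eqref{eq:fn}, $f_n(|S|)/f_n(|S|+1) \leq K_1^{-1} n^{a_1}$, and since $|S| \geq s^\dagger$ on this set, $(n-|S|)/(|S|+1) \leq (n-s^\dagger)/(s^\dagger+1)$. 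Summing and bounding the image mass by 1 yields
\[
\pi^n(\{S : S \supseteq S^\dagger,\, k \notin S\}) \leq C_\tau\, n^{a_1}\, \frac{n-s^\dagger}{s^\dagger+1}\, e^{-\alpha Y_k^2/(2\sigma^2)},
\]
with $C_\tau = K_1^{-1}(1+\alpha\tau^{-1})^{1/2}$.

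Taking the frequentist expectation then reduces to the standard Gaussian identity
\[
\E_{\theta^\star}\, e^{-\alpha Y_k^2/(2\sigma^2)} = (1 + \alpha)^{-1/2} \exp\!\Bigl\{-\frac{\alpha (\theta_k^\star)^2}{2\sigma^2(1+\alpha)}\Bigr\},
\]
which produces exactly the exponential rate with denominator $2\sigma^2(1+\alpha)/\alpha$ appearing in the definition of $\zeta_n^2$. Plugging the lower bound $|\theta_k^\star| > \zeta_n$ into the resulting inequality makes the right-hand side $o(1)$, giving $\E_{\theta^\star}(1-p_k^n) \to 0$, which is all that Lemma~\ref{lem:incl.prob} requires.

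The main obstacle I foresee is producing exactly the stated threshold, in particular the $(s^\star - s^\dagger - 1)\log 2$ term in \eqref{eq:zeta.n}. The clean bijection I sketched yields only $\frac{2\sigma^2(1+\alpha)}{\alpha}\{a_1 \log n + \log\frac{n-s^\dagger}{s^\dagger+1}\}$ plus constants, which captures every ingredient of $\zeta_n^2$ except that combinatorial term. I expect the missing $2^{s^\star - s^\dagger - 1}$ factor arises from replacing the crude bound $\sum \pi^n(S \cup \{k\}) \leq 1$ with a more refined comparison against a single reference weight such as $w(S^\dagger \cup \{k\})$: the numerator mass on $\{S : S^\dagger \subseteq S \subseteq S^\star,\, k \notin S\}$ splits into $2^{s^\star - s^\dagger - 1}$ subsets indexed by $T \subseteq J \setminus \{k\}$, and taking a worst-case bound over $T$ absorbs exactly this count, producing the $\log 2$ contribution in $\zeta_n^2$.
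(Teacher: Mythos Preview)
Your approach is correct and follows the same core idea as the paper's proof: pair each $S\not\ni k$ with $S\cup\{k\}$, use the closed-form ratio from \eqref{eq:S.post}, and apply the noncentral chi-square (Gaussian MGF) identity to handle $\E_{\theta^\star}e^{-\alpha Y_k^2/(2\sigma^2)}$. The reduction to Lemma~\ref{lem:incl.prob} and the use of Theorem~\ref{thm:selection}, Part~2, to dispose of configurations not containing $S^\dagger$ are exactly what the paper does (the paper phrases the latter as ``restrict to $\S^0$'').

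Where you differ is in the summation step, and your version is actually \emph{sharper}. The paper bounds each term individually via $\pi^n(S)\le \pi^n(S)/\pi^n(S\cup\{k\})$, restricts to $S^\dagger\subseteq S\subseteq S^\star$, and then sums over the $2^{s^\star-s^\dagger-1}$ subsets of $S^\star\setminus(S^\dagger\cup\{k\})$; that count is precisely the source of the $(s^\star-s^\dagger-1)\log 2$ term in \eqref{eq:zeta.n}. Your bijection argument instead writes $\sum_S \pi^n(S)=\sum_S\frac{\pi^n(S)}{\pi^n(S\cup\{k\})}\,\pi^n(S\cup\{k\})$, bounds the ratio uniformly, and uses $\sum_S\pi^n(S\cup\{k\})\le 1$. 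This avoids the combinatorial factor entirely and, as a bonus, does not require restricting to $S\subseteq S^\star$.

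So your worry in the last paragraph is misplaced: you do \emph{not} need to reproduce the $(s^\star-s^\dagger-1)\log 2$ term. Your argument shows that $\E_{\theta^\star}(1-p_k^n)\to 0$ already under the weaker condition $|\theta_k^\star|^2>\tfrac{2\sigma^2(1+\alpha)}{\alpha}\{a_1\log n+\log\tfrac{n-s^\dagger}{s^\dagger+1}\}$ (up to constants), and since this is dominated by $\zeta_n^2$ from \eqref{eq:zeta.n}, the theorem's hypothesis $|\theta_k^\star|>\zeta_n$ is a fortiori sufficient. You have proved the statement as written, with a slightly better implicit threshold; there is no gap.
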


\begin{proof}
Without loss of generality, take $\sigma^2=1$.  By Theorem~\ref{thm:selection}, we know that those $s^\dagger$ means are sufficiently large that the posterior will not exclude them.  Moreover, we know that no non-zero means will be included.  So the only configurations that need to be considered are those that contain all of the $s^\dagger$ large means and any of the $s^\star-s^\dagger$ non-zero but not-large means.  Let $\S^0$ denote this subset of configurations.  Then the exclusion probability for $\theta_k$ is asymptotically equal to  
\[ 1-p_k^n = \pi^n(S \not\ni k) = \sum_{S \in \S^0: S \not\ni k} \pi^n(S). \]
For a generic $S$ in this sum, we can bound the posterior probability, 
\[ \pi^n(S) \leq \frac{\pi^n(S)}{\pi^n(S \cup \{k\})} = \frac{\pi(S)}{\pi(S \cup \{k\})} (1 + \alpha \tau^{-1})^{1/2} e^{-\frac{\alpha}{2}|Y_k|^2}, \]
where the equality is based on plugging in the expression for $\pi^n$ in \eqref{eq:S.post}.  Taking expectation and using the bound on $\theta_k^\star$ in the lemma's statement, we get 
\[ \E_{\theta^\star} \{ \pi^n(S) \} \lesssim \frac{\pi(S)}{\pi(S \cup \{k\})} e^{-\frac{\alpha}{2(1+\alpha)} \zeta_n^2}. \]
Plugging in the definition of the prior $\pi$ for $S$, using the assumed bound in \eqref{eq:fn}, and writing $s=|S|$, we can simplify the right-hand side above:
\[ n^{a_1} \, \Bigl( \frac{n-s}{s+1} \Bigr) \, e^{-\frac{\alpha}{2(1+\alpha)} \zeta_n^2}. \]
Summing over all $S \in S^+$ with $S \not\ni k$, we get the upper bound 
\[ \E_{\theta^\star}(1 - p_k^n) = \sum_{S \in \S^+: S \not\ni k} \E_{\theta^\star}\{\pi^n(S)\} \lesssim n^{a_1} e^{-\frac{\alpha}{2(1+\alpha)} \zeta_n^2} \sum_{s=s^\dagger}^{s^\star-1} \binom{s^\star - s^\dagger - 1}{s-s^\dagger} \Bigl( \frac{n-s}{s+1} \Bigr). \]
It is easy to check that the remaining sum is upper-bounded by 
\[ \Bigl( \frac{n-s^\dagger}{s^\dagger+1} \Bigr) \, 2^{s^\star - s^\dagger - 1}. \]
Finally, we get 
\[ \E_{\theta^\star}(1 - p_k^n) \lesssim n^{a_1} \, e^{-\frac{\alpha}{2(1+\alpha)} \zeta_n^2} \, \Bigl(\frac{n-s^\dagger}{s^\dagger+1}\Bigr) \, 2^{s^\star - s^\dagger - 1}, \]
and it can be immediately seen that if $\zeta_n$ is as in \eqref{eq:zeta.n}, then the exclusion probability for $\theta_k$ vanishes.  Moreover, the claimed frequentist validity of the marginal posterior credible interval follows from Lemma~\ref{lem:incl.prob}.    
\end{proof}

One special case of the above setting is where all the $s^\star$ non-zero means exceed the threshold in \eqref{eq:bound} except for $\theta_k^\star$, so that $s^\dagger=s^\star-1$.  In that case, the threshold \eqref{eq:zeta.n} is 
\[ \zeta_n^2 = \tfrac{2\sigma^2(1+\alpha)}{\alpha} \bigl\{ a_1 \log n + \log\bigl(\tfrac{n-s^\star + 1}{s^\star}\bigr) \bigr\}. \]
Since $\log(\tfrac{n-s^\star+1}{s^\star})$ is strictly less than $\log n$, and the $M$ in \eqref{eq:bound} is larger than $1+a_1$, there is some improvement in the sense that $|\theta_k^\star|$ can actually be smaller than the threshold from Theorem~\ref{thm:selection} and still achieve the desired coverage probability.  However, since $s^\star=o(n)$, the difference is not substantial enough to impact the threshold's rate, apparently the improvement only shows up in the constant.  But compare this to Theorem~2.1 in \citet{pas.szabo.vaart.uq}, where they only have negative results about the coverage probability for marginal credible sets under the horseshoe prior when the non-zero means are smaller than a threshold like in \eqref{eq:bound}, even with a so-called ``blow-up factor.''  But, most importantly, we show numerically in Section~\ref{S:examples}, that this difference between the horseshoe and our empirical priors shows up in finite-samples.

\subsection{Extensions}

There are two immediate directions in which one might like to generalized the above results: to the regression setting, where the mean vector is $\theta = X\beta$, with $X$ a $n \times p$ matrix of covariates and $\beta$ a sparse $p$-vector of coefficients, possibly with $p \gg n$, and/or the error variance, $\sigma^2$, is unknown.  For the former case, \citet{martin.mess.walker.eb} establish posterior concentration rates for a version of the empirical prior (on $\beta$) considered here, along with selection consistency.  For this regression problem, the key technical obstacle is separating $\beta$ from $X\beta$, which requires use of a so-called ``sparse singular value'' of $X$; see Equation~(13) in \citet{martin.mess.walker.eb}, which is very similar to Equation~(11) in \citet{ariascastro.lounici.2014} and Definition~2.3 in \citet{castillo.schmidt.vaart.reg}.  These results for the regression problem where used by \citet{ebpred}---much like we used Theorems~\ref{thm:rate}--\ref{thm:selection} here---to establish a Bernstein--von Mises theorem and to investigate coverage properties of marginal credible intervals, in particular, for the purpose of out-of-sample prediction.  For the second direction, unknown $\sigma^2$, so far there has been no theoretical investigation into the performance of these posteriors based on an empirical prior.  The simulations presented in \citet{ebpred}, using a either a plug-in estimator of or a prior for $\sigma^2$, show good empirical performance.  So we fully expect that the theoretical properties of the posterior distribution would not be affected by unknown $\sigma^2$ either, but this still needs to be worked out.  We believe that this is doable because, at least with a conjugate prior for $\sigma^2$, there would be a closed-form expression for the marginal posterior distribution of $S$, which is what our theoretical analysis here focused on.  So, extending the results here would boil down to analyzing a different version of the ``$\pi^n(S)$'' expression considered here.

\section{Numerical investigations}
\label{S:examples}

In this section we investigate the numerical performance of the posterior distributions based on the two styles of empirical priors described in Section~\ref{SS:prior} compared to those based on the horseshoe prior introduced in \citet{carvalho.polson.scott.2010}.  In particular, we consider the following three methods:
\begin{itemize}
\item[HS.] For the horseshoe method we employ the {\tt horseshoe} package in R \citep{horseshoe.package} where maximum marginal likelihood, based on their function {\tt HS.MMLE}, is used to estimate the global scale parameter.  
\item[EB1.] The approach in \citet{martin.walker.eb} based on the beta--binomial prior $f_n$.  We use exactly the settings in that paper and the R code they provided.  
\item[EB2.] The approach in \citet{martin.mess.walker.eb} based on the complexity prior $f_n$.  We use the R code they provided, specialized from the regression to the means problem, and we take $\alpha = 0.95$ and $\tau = 0.025$, so that $\alpha + \tau < 1$; see Remark~\ref{re:alpha}.  
\end{itemize} 
Here we assume that $\sigma^2=1$ and all three methods use this known value.  

Our goal here is to compare the performance of marginal credible intervals for the three approaches described above.  The specific setting we consider here, similar to that in Section~2 of \citet{pas.szabo.vaart.uq}, assumes the first five means are relatively large, namely, $\theta_1^\star = \cdots = \theta_5^\star = 7$, the second five means are of intermediate magnitude, namely, $\theta_6^\star = \cdots = \theta_{10}^\star = 2$, $\theta_{11}^\star$ will vary, and the remaining $\theta_{12}^\star,\cdots,\theta_n^\star$ are 0.  We want to know how large does $\theta_{11}^\star$ have to be in order for the (two-sided) marginal credible interval for $\theta_{11}$ to attain the nominal frequentist coverage probability, which we take as 95\%, i.e., $\gamma=0.05$.  Figure~\ref{fig:cvg} plots the empirical coverage probability and average lengths of the marginal credible intervals for $\theta_{11}$, as a function of the signal size $\theta_{11}^\star$, for two different values of $n$, namely, $n=200$ and $n=500$.  These are Monte Carlo estimates based on 500 data sets at each value of $\theta_{11}^\star$ along a grid from 0 to 10. 

\begin{figure}[t]
\begin{center}
\subfigure[Coverage probability, $n=200$]{\scalebox{0.6}{\includegraphics{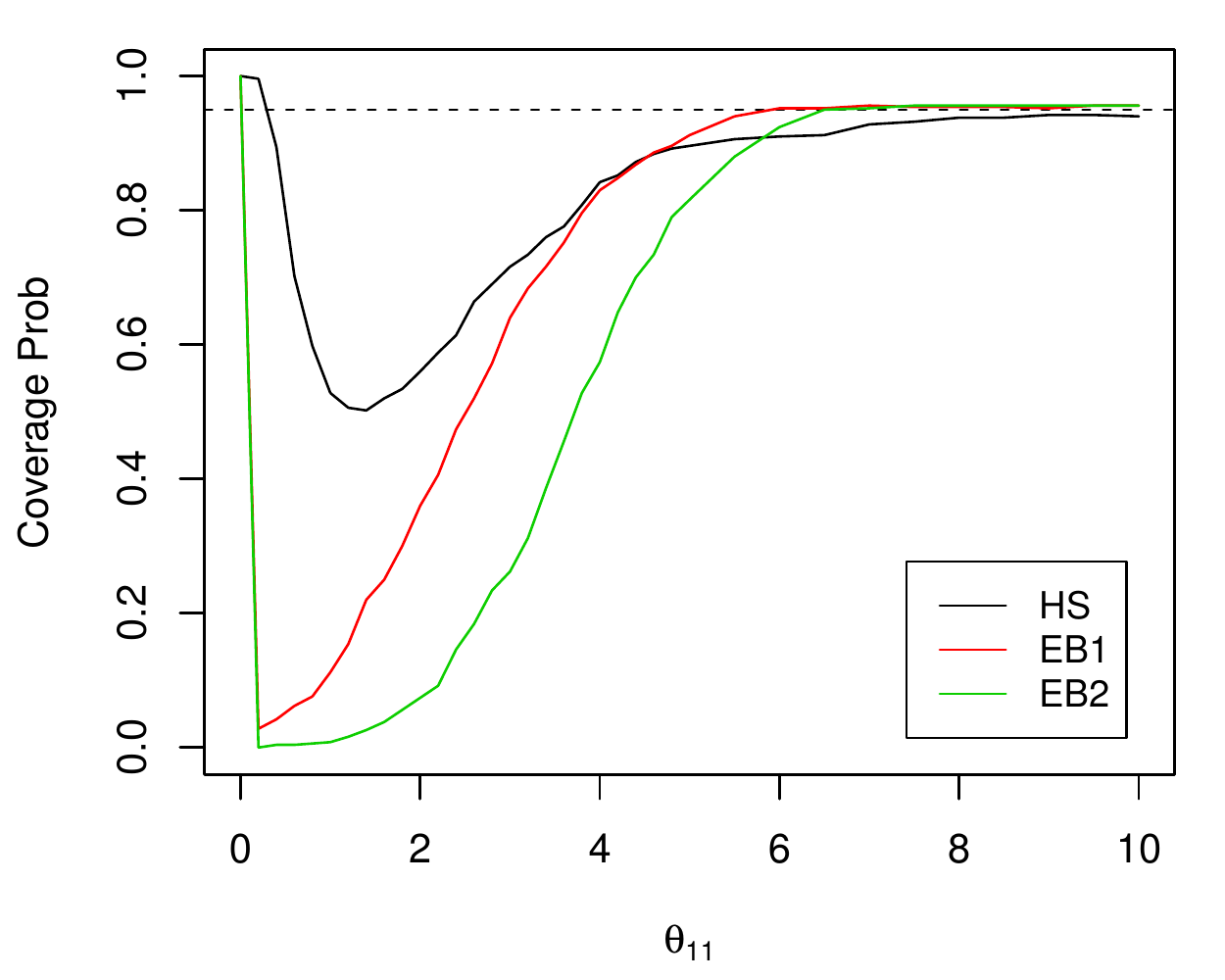}}}
\subfigure[Mean length, $n=200$]{\scalebox{0.6}{\includegraphics{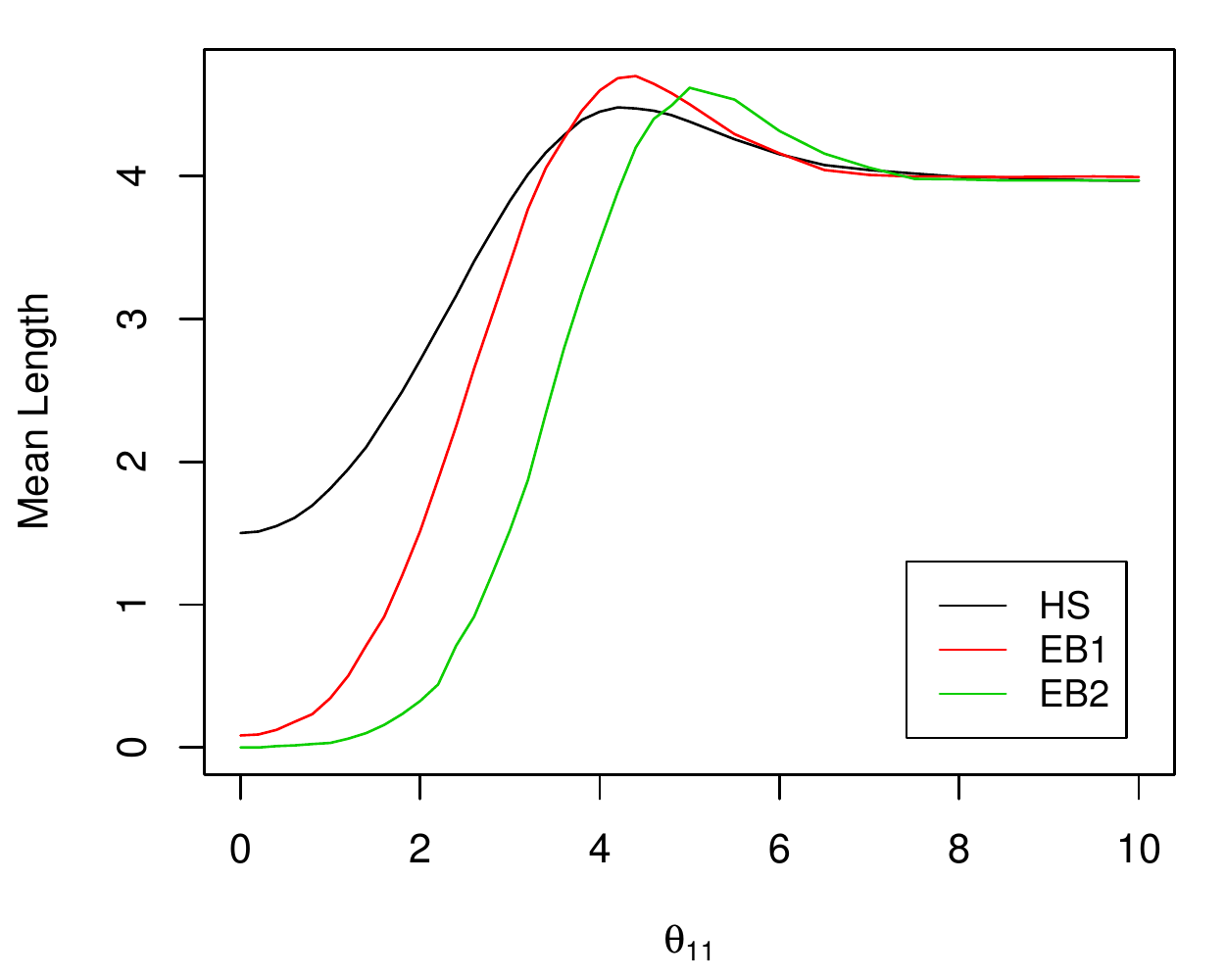}}}
\subfigure[Coverage probability, $n=500$]{\scalebox{0.6}{\includegraphics{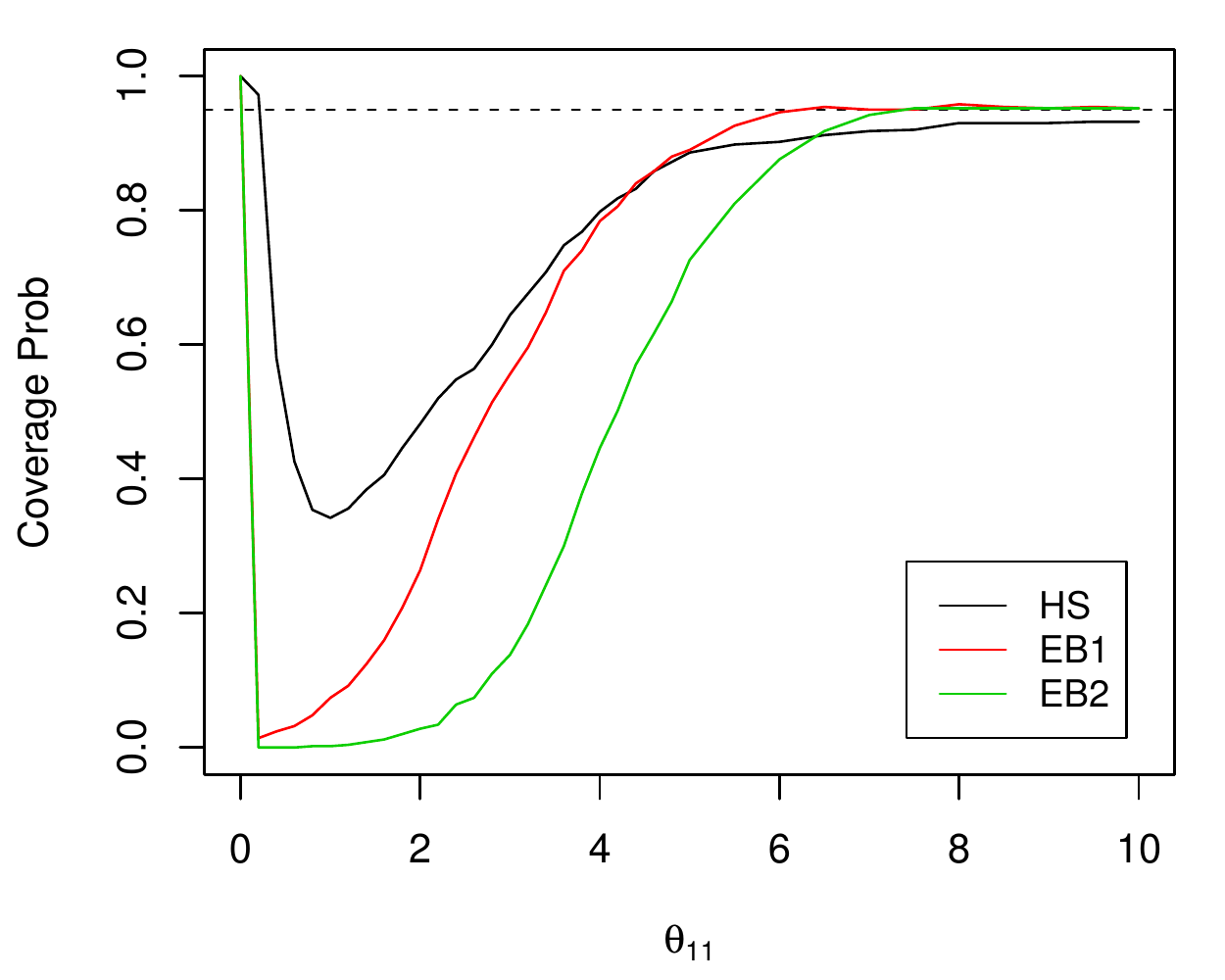}}}
\subfigure[Mean length, $n=500$]{\scalebox{0.6}{\includegraphics{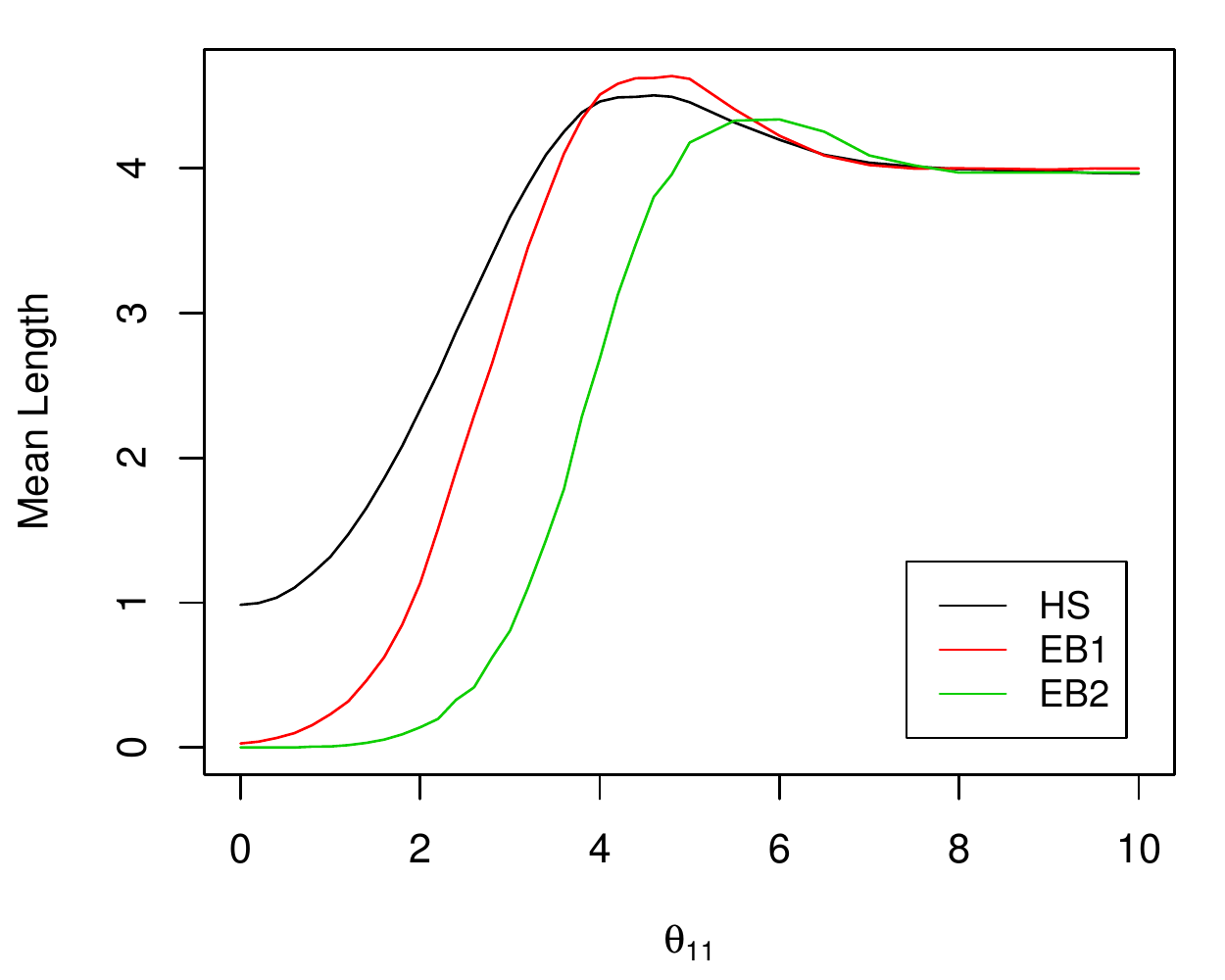}}}
\end{center}
\caption{Plots of the coverage probability and mean length of the marginal posterior credible intervals for $\theta_{11}$, as a function of $\theta_{11}^\star$, based on the three methods---HS, EB1, and EB2---and for two different sample sizes.}
\label{fig:cvg}
\end{figure}

The primary question we seek to answer here is {\em for what range of $\theta_{11}^\star$ values is the nominal 95\% coverage rate attained?}  The theoretical results in Section~\ref{S:coverage} indicated that the credible sets based on the empirical prior might have an advantage over the horseshoe-based credible sets in this respect.  So our conjecture is that the former will have a slightly broader range of nominal-coverage-attainment than the latter.  The results in Figure~\ref{fig:cvg} confirm this conjecture, that is, both the EB1 and EB2 coverage curves reach the nominal level well before HS for $n=200$ and $n=500$.  An interesting surprise is that EB1, based on the beta--binomial prior in \citet{martin.walker.eb} performs considerably better in terms of coverage than EB2 based on the complexity prior.  In our opinion, the Gibbs sampler for EB1 is a more elegant approach than the Metropolis--Hastings implementation of EB2, but it is not clear why the former would outperform the latter in terms of coverage probabilities.  And the length curves in Panels~(b) and (d) indicate that the EB gains over HS in coverage are not due to excessive length.  

Finally, despite being based on a two-groups or spike-and-slab formulation, both EB implementations were much faster to compute than HS based on the {\tt horseshoe} package.  In fact, we also ran the above experiment for $n=1000$ and could easily produce results for EB1 and EB2, which look similar to those in Figure~\ref{fig:cvg}, but the HS computations were prohibitively slow and regularly produced errors, hence not reported here.

\section{Conclusion}
\label{S:discuss}

In this paper, we focus on uncertainty quantification derived from posterior distributions based on a class of empirical priors.  This is certainly a relevant question to ask about such methods, given their {\em double-use} of the data.  That is, while estimation may not be negatively affected by the use of data in both the prior and the likelihood, it is possible that such an ``informative'' prior would make the posterior spread too narrow and, consequently, the coverage probability would fall below the nominal level.  Our investigation here revealed that this actually is not the case; in fact, in a certain sense, the empirical prior leads to improved uncertainty quantification compared to a more traditional approach where the prior is free of data.  Follow-up investigations should look into other aspects of uncertainty quantification, such as credible balls for the full $\theta$ vector with respect to various distances, and extension of these results to the regression setting and other structured high-dimensional problems.  

A unique feature of the empirical priors in this normal mean model is that the conditional prior variance for $\theta_S$, given $S$, is constant, it does not depend on data or even the sample size.  In general, however, the empirical priors constructed in \citet{martin.walker.deb} would have prior spread vanishing at some rate with $n$.  For example, in the regression setting, after writing the coefficient vector as $\beta=(S, \beta_S)$, the conditional empirical prior for $\beta_S$, given $S$, is $\nm_{|S|}(\hat\beta_S, \sigma^2 \tau^{-1} (X_S^\top X_S)^{-1})$, where $X_S$ is the $n \times |S|$ sub-matrix of the full $n \times p$ matrix $X$, and $\hat\beta_S$ is the least-squares estimate corresponding to $X_S$.  Note that each entry in this variance is $O(n^{-1})$.  Despite the relatively tight prior concentration, the method in \citet{martin.mess.walker.eb} performs well in terms of estimation and variable selection.  That the prior variance is consistent with that of the sampling distribution of $\hat\beta_S$, the prior mean, suggests that uncertainty quantification will not be impacted by the prior concentration.  Indeed, in \citet{ebpred}, the focus was on posterior credible sets for out-of-sample prediction in the sparse high-dimensional regression problem and, in addition to the theoretical results, their simulation study reveals properties similar those demonstrated here, namely, that the empirical Bayes credible sets achieve the desired coverage and are no less efficient than the horseshoe, even with unknown error variance.  Moreover, in a monotone density estimation context, \citet{ebmono} constructed an empirical prior, whose spread was vanishing with $n$, and his simulation experiments revealed that the coverage of posterior credible sets was not negatively affected by the prior concentration and, in fact, the coverage performance was better in some cases than the proper Bayesian posterior credible sets from the Dirichlet process-based formulation in \citet{salomond2014}.  Full justification of these claims in this and other nonparametric problems, however, requires further numerical and theoretical investigations.

\section*{Acknowledgments}

The authors thank the editors of the special issue of {\em Sankhya A} dedicated to Jayanta K.~Ghosh for the invitation to contribute, and the anonymous reviewers for their helpful suggestions that improved both our results and presentation.  This work is partially supported by the National Science Foundation, DMS--1737933.

\appendix

\section{Proof of Theorem~\ref{thm:selection}}
\label{S:proofs}

The proof strategy here closely follows that of Theorem~$5'$ in the supplement to \citet{martin.mess.walker.eb} presented in the most recent arXiv version ({\tt arXiv:1406.7718}).  To fix notation, let $S^\star$ be the true configuration of size $s^\star=|S^\star|$, and let $S^\dagger \subseteq S^\star$ be the set of all $i$ such that $|\theta_i^\star| \geq \rho_n$, where $\rho_n$ is as in \eqref{eq:bound}, and write $s^\dagger=|S^\dagger|$.  

Based on Theorem~2 in \citet{martin.mess.walker.eb}, we can restrict to configurations $S$ such that $|S| \leq Cs^\star$, where $s^\star = |S^\star|$ and $C$ is a large constant.  Take such an $S$ that also satisfies $S \not\supseteq S^\dagger$.   Then $\pi^n(S)$ can be bounded as follows:
\[ \pi^n(S) \leq \frac{\pi^n(S)}{\pi^n(S^\dagger)} = \frac{\pi(S)}{\pi(S^\dagger)} z^{|S|-s^\dagger} e^{\frac{\alpha}{2\sigma^2}\{\|Y_{S^{\dagger c}}\|^2 - \|Y_{S^c}\|^2\}}, \]
where $z = (1 + \alpha \tau^{-1})^{-1/2} < 1$.  A key observation is that 
\[ \|Y_{S^{\dagger c}}\|^2 - \|Y_{S^c}\|^2 = \|Y_{S \cap S^{\dagger c}}\|^2 - \|Y_{S^c \cap S^\dagger}\|^2, \]
and the latter two terms are independent since they depend on disjoint sets of $Y_i$'s.  Therefore, using this independence and the familiar central and non-central chi-square moment generating functions, we get 
\[ \E_{\theta^\star} e^{\frac{\alpha}{2\sigma^2}\{\|Y_{S^{\dagger c}}\|^2 - \|Y_{S^c}\|^2\}} = (1-\alpha)^{-|S \cap S^{\dagger c}|} (1+\alpha)^{-|S^c \cap S^\dagger|} e^{-\frac{\alpha}{2(1+\alpha)\sigma^2} \|\theta_{S^c \cap S^\dagger}^\star\|^2}. \]
By definition of $S^\dagger$, and the fact that $1+\alpha > 1$, the above expectation can be upper-bounded by 
\[ (1-\alpha)^{-|S \cap S^{\dagger c}|} (n^M)^{-|S^c \cap S^\dagger|}. \]
Putting the pieces together we have 
\[ \E_{\theta^\star} \pi^n(S) \leq \frac{\pi(S)}{\pi(S^\dagger)} z^{|S|-s^\dagger} (1-\alpha)^{-|S \cap S^{\dagger c}|} (n^M)^{-|S^c \cap S^\dagger|}. \]
We want to sum this over all $S \not\supseteq S^\dagger$ but, since it only involves size of $S$, we only need to sum over sizes.  Indeed, after plugging in the definition of $\pi(S)$ we get 
\[ \sum_{S: S \not\supseteq S^\dagger, |S| \leq C s^\star} \E_{\theta^\star} \pi^n(S) \leq \sum_{s=0}^{Cs^\star} \sum_{t=0}^{s \wedge s^\dagger} \frac{\binom{s}{t} \binom{n-s^\dagger}{s-t} \binom{n}{s^\dagger}}{\binom{n}{s}} \frac{f_n(s)}{f_n(s^\dagger)} z^{s-s^\dagger} (1-\alpha)^{-(s-t)} (n^M)^{-(s^\dagger - t)}. \]
For the binomial coefficient ratio we have the following simplification and bound:
\[ \frac{\binom{s}{t} \binom{n-s^\dagger}{s-t} \binom{n}{s^\dagger}}{\binom{n}{s}} = \binom{s}{t} \binom{n-s}{s^\dagger - t} \leq s^{s-t} n^{s^\dagger - t}. \]
Next, to bound the double-sum, split it into two parts:
\[ \sum_{s=0}^{Cs^\star} \sum_{t=0}^{s \wedge s^\dagger} (\cdots) = \sum_{s=0}^{s^\dagger - 1} \sum_{t=0}^s (\cdots) + \sum_{s=s^\dagger}^{Cs^\star} \sum_{t=0}^{s^\dagger} (\cdots) \]
We need to show that both parts on the right-hand side above vanish as $n \to \infty$.  For the first double-sum we have 
\[ \sum_{s=0}^{s^\dagger - 1} \sum_{t=0}^s (\cdots) = \sum_{s=0}^{s^\dagger - 1} \Bigl( \frac{1}{K_1 n^{M-a_1-1}} \Bigr)^{s^\dagger - s} \sum_{t=0}^s \Bigl( \frac{s}{(1-\alpha)n^{M-1}} \Bigr)^{s-t}. \]
Since $M > 1 + a_1$, the inner sum is $O(1)$ and the outer sum---because there is a common $n^{-(M-a_1-1)}$ factor---is $o(1)$ as $n \to \infty$.  Similarly, for the second double-sum we have 
\[ \sum_{s=s^\dagger}^{Cs^\star} \sum_{t=0}^{s^\dagger} (\cdots) = \sum_{s=s^\dagger}^{Cs^\star} \Bigl( \frac{K_2 s}{(1-\alpha)n^{a_2}} \Bigr)^{s-s^\dagger} \sum_{t=0}^{s^\dagger} \Bigl( \frac{s}{1-\alpha} \Bigr)^{s^\dagger - t} \Bigl( \frac{1}{n^{M-1}} \Bigr)^{s^\dagger - t}. \]
The inner sum is $O(1)$ and, since $a_2 < a_1$, the outer sum is upper-bounded by $O(s^\star n^{-a_2})$ which goes to 0 by assumption.  Both terms in the double-sum above vanish as $n \to \infty$, thus proving the claim.  

\ifthenelse{1=1}{
\bibliographystyle{apalike}
\bibliography{/Users/rgmarti3/Dropbox/Research/mybib}
}{

}

\end{document}